\documentclass[12pt]{article}
\usepackage{fullpage}
\usepackage{amsmath, amsthm, amssymb}
\textheight=8.5in
\numberwithin{equation}{section}
\linespread{1.0}

\newtheorem{theorem}{Theorem}
\newtheorem{lemma}{Lemma}

\begin{document}

\author{Ajai Choudhry}

\title{Two sets of integers such that all elements\\
 of the sumset of the two sets\\
are perfect squares}
\maketitle
\date{}

\abstract{This paper is concerned with the  problem of finding two  sets of  integers, $\{a_1, a_2, \ldots$,   $a_m\}$ and $\{b_1, b_2,  \ldots, b_n\}$,  such that all the  $mn$ sums $a_i+b_j, i=1, \ldots, m, j=1, \ldots, n$, are perfect squares. A method is known for generating numerical examples of such sets when  $m=2$ or 3 and $n$ is arbitrary. When both $m$ and $n$ exceed 2, only one two-parameter solution with $(m, n)=(4, 4)$ has been published. In this paper we obtain several multi-parameter solutions of the problem in three cases when $(m, n)$ is $(3, 3)$ or $(5, 3)$ or $(4, 4)$, and we indicate  how more such solutions may be obtained.

Keywords: perfect squares; semi-magic matrices with squared entries. 

Mathematics Subject Classification 2020: 11D09
 
\section{Introduction}
This paper is concerned with the  problem of finding two  sets of  integers, $A= \{a_1, a_2, \ldots$, $a_m\}$ and $B=\{b_1, b_2, \ldots, b_n\}$,  such that all elements of the sumset $A+B$, that is, the  $mn$ sums $a_i+b_j, i=1, \ldots, m, j=1, \ldots, n$, are perfect squares. The problem  has been mentioned in Guy's book, `Unsolved Problems in Number Theory' \cite[Problem D14, pp. 266-267]{Gu} where a method is described for  generating numerical solutions of the problem  when $m=2$ or 3, and $n$ is arbitrary.

When $(m, n)=(4, 4)$,  Guy\cite[p. 267]{Gu} has presented a numerical solution, as well as a parametric solution found by Jean Lagrange 
by simply stating the related   $4 \times 4$ matrices of squares whose $(i, j)^{\rm th}$ entry is $a_i+b_j$. We thus get the numerical quadruples $A=\{324,  54756,  119716,  264196\}$ and $B=\{0, 79200,  227205,  1258560\}$, and also a solution in terms of arbitrary parameters $u$ and $v$ given by
\begin{equation}
\begin{aligned}
a_1   & = 9(18u^2 + 50uv + 37v^2)^2, \\
a_2   & = 36(u^2 + 13uv + 16v^2)^2, \\
a_3   & = 36(9u^2 + 53uv + 64v^2)^2, \\
a_4   & = 4(3u^2 - 25uv - 48v^2)^2, \\
b_1   & =0, \\
b_2   & = 20(u - 4v)(11u + 19v)(u + 4v)(u + v), \\
b_3   & = 36(3u + 11v)(7u + 12v)(3u + 4v)(u + v),\\
 b_4   & = 756(u - v)(3u + 2v)(u + 3v)(u + 2v),
\end{aligned}
\label{Lagrangesol}
\end{equation}
such that all the 16 sums $a_i+b_j$ are squares. Apart from the above  solution, no other parametric solutions seem to have been published for any other values of $(m, n)$. 

In this paper we obtain several multi-parameter solutions of the problem when $(m, n)$ is $(3, 3)$ or $(5, 3)$ or $(4, 4)$ and we show how more such parametric solutions may be obtained. We obtain all our parametric solutions by using semi-magic matrices that satisfy certain conditions. In Section  \ref{prelemma} we prove a couple of preliminary lemmas, and in the next three sections we apply the lemmas to obtain parametric solutions for the three cases mentioned above.  We conclude the paper with a couple of open problems.

	\section {Preliminary lemmas and remarks}\label{prelemma}
	We first give a  preliminary lemma about two sets of rational numbers, namely $A=\{a_i, i=1, \ldots, m\}$  and $B=\{b_j, j=1, \ldots, n\}$, such that  all the $mn$  sums $a_i+ b_j, i=1, \ldots, m$, and $j=1, \ldots, n$, are   squares of rational numbers. It immediately follows from this lemma that  to obtain  two sets of integers  $\{a_i, i=1, \ldots, m\}$  and $\{b_j, j=1, \ldots, n\}$ such that all the $mn$ sums $a_i+b_j$ are squares, it suffices to obtain two sets of rational numbers with the same property.   
	
\begin{lemma}\label{canonicallemma} 	If there exist two sets of rational numbers, namely $A=\{a_i, i=1, \ldots, m\}$  and $B=\{b_j, j=1, \ldots, n\}$, such that all the $mn$ sums $a_i+ b_j, i=1, \ldots, m$, and $j=1, \ldots, n$, are  squares, then if $t$ is an arbitrary integer and $k$ is either an arbitrary integer or a rational number so chosen that $k^2a_i, i=1, \ldots, m$,  and $k^2b_j, j=1, \ldots, n$, are all integers, and we define $a^{\prime}_i={k^2a_i-t, i=1, \ldots, m}$, $b^{\prime}_j={k^2b_j+t, i=1, \ldots, n}$, then the two sets $A^{\prime}=\{a^{\prime}_i, i=1, \ldots, n\}$  and $B^{\prime}=\{b^{\prime}_j, j=1, \ldots, n\}$  consist of integers such that all the $mn$ sums $a^{\prime}_i+ b^{\prime}_j, i=1, \ldots, m$, and $j=1, \ldots, n$, are  perfect squares.
\end{lemma}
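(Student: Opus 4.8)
The plan is to verify the conclusion by a direct entrywise computation, the key point being that adding $t$ to every $b_j$ while subtracting it from every $a_i$ makes $t$ cancel in each sum $a'_i+b'_j$. First I would invoke the hypothesis to fix, for each pair $(i,j)$, a rational number $r_{ij}$ with $a_i+b_j=r_{ij}^2$. Then the computation
\[
a'_i+b'_j=(k^2a_i-t)+(k^2b_j+t)=k^2(a_i+b_j)=(kr_{ij})^2
\]
shows that each of the $mn$ sums is again the square of a rational number, namely $kr_{ij}$.

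Next I would check integrality. By the choice of $k$ the numbers $k^2a_i$ and $k^2b_j$ are integers, and $t$ is an integer, so each $a'_i=k^2a_i-t$ and each $b'_j=k^2b_j+t$ is an integer, and hence so is each sum $a'_i+b'_j$. Note that $m$ and $n$ play no role here: the argument is identical for every entry, so it suffices to treat a single pair $(i,j)$.

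The only step that is not entirely a triviality — the ``hard part'', such as it is — is the inference that an integer which happens to be the square of a rational number is in fact a perfect square. For this I would quote the standard fact (the rational root theorem, or unique factorization in $\mathbb{Z}$): if $N\in\mathbb{Z}$ and $N=(p/q)^2$ with $\gcd(p,q)=1$, then $q=1$. Applying this with $N=a'_i+b'_j$ and $p/q=kr_{ij}$ in lowest terms gives $kr_{ij}\in\mathbb{Z}$, so $a'_i+b'_j$ is a perfect square, which completes the proof. This argument also makes transparent the remark following the lemma: it suffices to find solutions of the sumset problem in rational numbers, since any such solution can be cleared of denominators by a factor $k^2$ and then translated by $\pm t$ to yield an integer solution.
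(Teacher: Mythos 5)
Your proof is correct and follows essentially the same route as the paper: the $t$-terms cancel, each sum equals $k^2(a_i+b_j)=(kr_{ij})^2$, and integrality is immediate from the choice of $k$ and $t$. The only difference is that you explicitly justify the (implicit in the paper) step that an integer which is a rational square is a perfect square, which is a welcome bit of added rigor but not a departure in method.
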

\begin{proof} The numbers $a^{\prime}_i, i=1, \ldots, m$ 	and $b^{\prime}_j, j=1, \ldots, n$, are clearly integers. Further, the $mn$ sums $a^{\prime}_i+ b^{\prime}_j$ may be written as $k^2(a_i+ b_j), i=1, \ldots, m$, and $j=1, \ldots, n$  and are hence perfect squares. This proves the lemma.
\end{proof}

In view of the above lemma,  two such sets of integers, $A$ and $B$, can be presented in a standard form in which all the integers of the two sets are nonnegative, they do not have a common squared factor, the integers of both the sets are in ascending order of magnitude and  the smallest integer of the set $B$ is 0. In this standard form,   all the integers 	 of the set $A$ are necessarily  perfect squares. The numerical examples of such sets  obtained in this paper will be presented in this standard  form with $b_1=0$. The sets $A$ and $B$ that we obtain in terms of certain arbitrary parameters will be presented generally, but not always, with $b_1=0$ and such that the elements  of the two sets  do not have a common squared factor.

\hspace{0.25in}	It would be recalled that a square matrix is said to be a semi-magic matrix if the sum of the entries in each row and each column is the same. This common sum is referred to as the semi-magic sum. The following lemma relates the existence of two sets of integers  $A=\{a_i, i=1, \ldots, m\}$ and $B=\{b_i, j=1, \ldots, n\}$, where $(m, n)=(3, 3)$ or (5, 3) or (4, 4), such that all the $mn$  sums $a_i +b_j$ are perfect squares  to certain  $ 3 \times 3 $ semi-magic matrices  which  satisfy certain conditions.

\begin{lemma}\label{onelem}
There exist two triples of integers $A=\{a_1,\,a_2,\,a_3\}$ and $B=\{b_1,\,b_2,\,b_3\}$  such that all the nine sums $a_i +b_j, i=1, 2, 3, j=1, 2, 3$, are squares if and only if there exists a $3 \times 3$ semi-magic integer matrix $E$ with squared entries. When such a matrix $E$ exists and is defined by 
\begin{align}
E=\begin{bmatrix}e_{11}^2&e_{12}^2&e_{13}^2\\e_{21}^2 &e_{22}^2 &e_{23}^2 \\ e_{31}^2 &e_{32}^2 &e_{33}^2 \end{bmatrix},
\label{matrixE}
\end{align}
then 
the two triples of integers $A=\{a_1,\,a_2,\,a_3\}$ and $B=\{b_1,\,b_2,\,b_3\}$ defined by
\begin{equation}
\begin{aligned}
a_1 & =  e_{13}^2, &   a_2 & = e_{31}^2, &   a_3 & =  e_{22}^2, \\
 b_1 & =  0, &   b_2 & = e_{21}^2 -e_{13}^2 , &   b_3 & =  e_{11}^2 - e_{22}^2,
\end{aligned}
\label{defAB}
\end{equation}
are such that all the nine sums $a_i +b_j, i=1, 2, 3, j=1, 2, 3$, are squares.

Further, the two triples $A$ and $B$ defined by \eqref{defAB} may be extended to a quintuple $A_1=\{a_1,\,a_2,\,a_3, a_4, a_5\}$ and a triple $B_1=\{b_1,\,b_2,\,b_3\}$ such that all the 15 sums $a_i +b_j, i=1, \ldots, 5, j=1, 2, 3$, are squares if and only if, in addition to the matrix $E$ defined by \eqref{matrixE}, there exists a $ 3 \times 3$ semi-magic integer matrix $F$ with squared entries, namely 
\[
F=\begin{bmatrix}f_{11}^2&f_{12}^2&f_{13}^2\\f_{21}^2 &f_{22}^2 &f_{23}^2 \\ f_{31}^2 &f_{32}^2 &f_{33}^2 \end{bmatrix}, \label{matrixF}
\] 
such that the entries of the two matrices $E$ and $F$ satisfy the following conditions:
\begin{equation}
e_{11}^2=f_{11}^2,  \quad e_{22}^2=f_{22}^2, \quad e_{33}^2=f_{33}^2.
\label{relef}
\end{equation}
When two such matrices $E$ and $F$ exist, and $a_i, b_i, i=1, 2, 3$, are  defined, as before, by \eqref{defAB} and $a_4, a_5$ are defined by
\begin{equation}
a_4 = f_{13}^2, \quad a_5 = f_{31}^2, \label{vala45}
\end{equation}
then  all the 15 sums $a_i+b_j, i=1, \ldots, 5, b_j =1, 2, 3$, are squares.

Finally,  the two triples of integers $A$ and $B$ defined by \eqref{defAB} may be extended to two quadruples of integers $A_2=\{a_1, a_2, a_3, a_4\}$ and $B_2=\{b_1, b_2, b_3, b_4\}$ such that all the 16 sums $a_i +b_j, i=1, \ldots, 4, j=1, \ldots, 4$ are  squares  if and only if, in addition to the matrix $E$, there exist  two   $3 \times 3$ semi-magic integer matrices $G$ and $H$ with squared entries, namely
\[
G=\begin{bmatrix}g_{11}^2&g_{12}^2&g_{13}^2\\g_{21}^2 &g_{22}^2 &g_{23}^2 \\ g_{31}^2 &g_{32}^2 &g_{33}^2 \end{bmatrix},\;
H=\begin{bmatrix}h_{11}^2&h_{12}^2&h_{13}^2\\h_{21}^2 &h_{22}^2 &h_{23}^2 \\ h_{31}^2 &h_{32}^2 &h_{33}^2 \end{bmatrix},\; 
\label{matrixGH}
\]
such that  the entries of the three matrices $E, G$ and $H$ satisfy the following conditions:
\begin{align}
e_{12}^2 &  =g_{12}^2 , & e_{13}^2 &  =g_{13}^2 ,  &e_{21}^2 &  =g_{21}^2 ,  &e_{31}^2 &  =g_{31}^2 , \label{releg} \\
e_{11}^2 &  =h_{11}^2 , & e_{22}^2 &  =h_{22}^2 , &&&& \label{releh} \\
g_{11}^2 &  =h_{12}^2 , & g_{22}^2 &  =h_{31}^2 . &&&& \label{relgh}
\end{align}
When such matrices $E, G$ and $H$ exist, and $a_i, b_i, i=1, 2, 3$, are  defined, as before, by \eqref{defAB} and $a_4, b_4$ are defined by
\begin{equation}
 a_4  =  g_{22}^2, \quad b_4  =  g_{11}^2 - g_{22}^2,
\label{vala4b4}
\end{equation}
the two quadruples of integers $A_2=\{a_1, a_2, a_3, a_4\}$ and $B_2=\{b_1, b_2, b_3, b_4\}$
are such that all the 16 sums $a_i +b_j, i=1, \ldots, 4, j=1, \ldots, 4$,       are  squares.
\end{lemma}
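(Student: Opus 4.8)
The plan is to derive all three equivalences from one construction. Given two ordered triples $(\alpha_1,\alpha_2,\alpha_3)$ and $(\beta_1,\beta_2,\beta_3)$, form the matrix
\[
R=\begin{bmatrix} \alpha_3+\beta_3 & \alpha_2+\beta_2 & \alpha_1+\beta_1 \\ \alpha_1+\beta_2 & \alpha_3+\beta_1 & \alpha_2+\beta_3 \\ \alpha_2+\beta_1 & \alpha_1+\beta_3 & \alpha_3+\beta_2\end{bmatrix}.
\]
Every row and every column of $R$ contains exactly one of $\alpha_1,\alpha_2,\alpha_3$ and exactly one of $\beta_1,\beta_2,\beta_3$, so each row sum and each column sum equals $\alpha_1+\alpha_2+\alpha_3+\beta_1+\beta_2+\beta_3$; thus $R$ is semi-magic, its nine entries are the nine sums $\alpha_i+\beta_j$, and when $\beta_1=0$ one checks directly that feeding $R$ into \eqref{defAB} returns the triples $(\alpha_1,\alpha_2,\alpha_3)$ and $(\beta_1,\beta_2,\beta_3)$. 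This settles the $3\times3$ case. For ``only if'': if the nine sums $a_i+b_j$ are squares then the matrix $R$ built from $(a_1,a_2,a_3)$ and $(b_1,b_2,b_3)$ is a semi-magic integer matrix with squared entries. For ``if'': I substitute \eqref{defAB} into the nine sums; five of them collapse to a single entry $e_{rs}^2$ of $E$ by definition, and the remaining four become squares on using one row relation and one column relation of $E$ --- for instance, writing $s$ for the semi-magic sum, $a_1+b_3=e_{13}^2+e_{11}^2-e_{22}^2=(s-e_{12}^2)-e_{22}^2=e_{32}^2$, and similarly $a_2+b_2=e_{12}^2$, $a_2+b_3=e_{23}^2$, $a_3+b_2=e_{33}^2$. (So the array $[\,a_i+b_j\,]$ is a rearrangement of the entries of $E$, and the two constructions are mutually inverse.)

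For the $(5,3)$ extension, the ``if'' direction applies the $3\times3$ case to $F$: it produces the triples $\{f_{13}^2,f_{31}^2,f_{22}^2\}$ and $\{0,\,f_{21}^2-f_{13}^2,\,f_{11}^2-f_{22}^2\}$, all nine of whose pairwise sums are squares, and by \eqref{vala45}, \eqref{relef} and \eqref{defAB} the first triple is $\{a_4,a_5,a_3\}$. It remains to check that the second triple equals $B=\{b_1,b_2,b_3\}$: its last entry is $f_{11}^2-f_{22}^2=e_{11}^2-e_{22}^2=b_3$ by \eqref{relef} and \eqref{defAB}, while for the middle entry the semi-magic relations of $F$ give $f_{21}^2-f_{13}^2=f_{33}^2-f_{22}^2$ and those of $E$ give $b_2=e_{21}^2-e_{13}^2=e_{33}^2-e_{22}^2$, so \eqref{relef} matches them. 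Together with the $3\times3$ case for $\{a_1,a_2,a_3\}\times B$ this gives all fifteen sums. Conversely, given the quintuple, the sub-family $\{a_3,a_4,a_5\}\times B$ has all nine sums square, so I take $F$ to be the matrix $R$ built from $(a_4,a_5,a_3)$ and $(b_1,b_2,b_3)$, and verify \eqref{relef} using $e_{22}^2=a_3$ and $e_{11}^2=a_3+b_3$ (from \eqref{defAB}) and $e_{33}^2=e_{22}^2+e_{21}^2-e_{13}^2=a_3+b_2$ (from the semi-magic property of $E$); the equalities \eqref{vala45} hold because $b_1=0$.

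For the $(4,4)$ case I cover the sixteen sums by the sub-families $\{a_1,a_2,a_3\}\times\{b_1,b_2,b_3\}$ (matrix $E$) and $\{a_1,a_2,a_4\}\times\{b_1,b_2,b_4\}$ (matrix $G$), which together miss exactly the two sums $a_3+b_4$ and $a_4+b_3$, supplied by a third sub-family (matrix $H$). For ``if'': the $3\times3$ case applied to $G$, together with \eqref{releg} (which gives $g_{13}^2=a_1$, $g_{31}^2=a_2$, $g_{21}^2-g_{13}^2=b_2$) and the definitions \eqref{vala4b4}, shows that $G$ encodes $\{a_1,a_2,a_4\}\times\{b_1,b_2,b_4\}$; applied to $H$, together with \eqref{releh}, \eqref{relgh} and the identity $h_{21}^2-h_{13}^2=h_{12}^2-h_{31}^2$, it shows that $H$ encodes a triple containing $a_3$ and $a_4$ paired with $\{b_1,b_3,b_4\}$, whence $a_3+b_4$ and $a_4+b_3$ are squares; a one-line check of which cells of the $4\times4$ array lie outside the first two sub-families then finishes it. Conversely I set $G$ equal to the matrix $R$ built from $(a_1,a_2,a_4)$ and $(b_1,b_2,b_4)$ and $H$ equal to the matrix $R$ built from $(a_1,a_4,a_3)$ and $(b_1,b_4,b_3)$, and verify \eqref{releg}--\eqref{relgh} and \eqref{vala4b4} by rewriting each prescribed entry as a sum $a_i+b_j$ via \eqref{defAB} and the semi-magic identities of $E$ (e.g. $g_{12}^2=a_2+b_2=e_{31}^2+e_{21}^2-e_{13}^2=e_{12}^2$).

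The algebraic verifications --- that each matrix $R$ is semi-magic, and that the substitutions produce squares --- are routine. I expect the real content, and the main source of error, to be bookkeeping: arranging the labels so that the several $3\times3$ matrices share precisely the entries required by \eqref{relef} and \eqref{releg}--\eqref{relgh}, and, in the converse directions, ordering the rows and columns of each $R$ so that the designated squares land in the designated cells. The point that keeps this manageable is that the gluing conditions are exactly the compatibility constraints among the overlapping sub-families; in the $4\times4$ case there is also the pleasant fact that the mere existence of a semi-magic matrix $H$ with squared entries obeying \eqref{releh}--\eqref{relgh} already forces $a_3+b_4$ and $a_4+b_3$ to be squares.
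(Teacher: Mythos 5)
Your proposal is correct and takes essentially the same route as the paper: your matrix $R$ is precisely the paper's matrices $E_1$, $F_1$, $G_1$, $H_1$ in the converse directions, and your forward verifications rest on the same semi-magic row/column identities, with your re-application of the $3\times 3$ case to $F$, $G$ and $H$ (e.g.\ via $h_{21}^2-h_{13}^2=h_{12}^2-h_{31}^2$) merely repackaging the paper's direct computations of the six, respectively two, extra sums. No gaps beyond routine bookkeeping, which you have carried out correctly.
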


\begin{proof}
If there exist two triples of integers $\{a_i, a_2, a_3\}$ and $\{b_i,b_2, b_3\}$ such that all the nine sums $a_i +b_j$ are perfect squares, then it is readily seen that the matrix 
\begin{align}
E_1 & =\begin{bmatrix} a_3 + b_3 &  a_2 + b_2 &  a_1 + b_1 \\  a_1 + b_2 &  a_3 + b_1 &  a_2 + b_3 \\  a_2 + b_1 &  a_1 + b_3 &  a_3 + b_2 \end{bmatrix}
\label{matrixE1}
\end{align}
is a semi-magic matrix with common sum $a_1+a_2+a_3+b_1+b_2+b_3$, and all the entries of the matrix $E$ are perfect squares. 

Conversely, if the matrix $E$  defined by \eqref{matrixE} is semi-magic, we will show that  the integers $a_i, b_i$ defined by \eqref{defAB} are such that all the nine sums $a_i+b_j, i=1, 2, 3, j=1, 2, 3$, are squares. 

Since $b_1=0$, the three sums $a_i+b_1, i=1, \ldots, 3$, are clearly  squares.  Further, since $E$ is a semi-magic matrix,  we have,  
\begin{multline*}
\begin{aligned}
a_1+b_2 &=  e_{ 21}^2, \quad & a_1+b_3&=e_{ 13}^2+ e_{ 11}^2  - e_{ 22}^2 =  e_{ 32}^2,\\
a_2+b_2 &=e_{31}^2+e_{21}^2-e_{13}^2= e_{ 12}^2 , \quad & a_2+b_3 &= e_{31}^2 + e_{ 11}^2 - e_{ 22}^2= e_{ 23}^2, \\
a_3+b_2 & =e_{22}^2+ e_{21}^2 -e_{13}^2   = e_{33}^2, \quad & a_3+ b_3 &= e_{11}^2.
\end{aligned}
\end{multline*}
Thus, all the nine sums $a_i+b_j, i=1, 2, 3, j=1, 2, 3$, are squares. This proves the first part of the lemma concerning the two triples $A$ and $B$. 

Next, we note that if there exists a quintuple $A_1=\{a_1,\ldots, a_5\}$ and a triple $B_1=\{b_1,\,b_2,\,b_3\}$ such that all the 15 sums $a_i+b_j, i=1, \ldots, 5,  j=1, 2, 3$, are  squares, then all the entries of the two matrices $E_1$,  defined by \eqref{matrixE1}, and the matrix $F_1$  defined by 
\begin{align}
F_1  =\begin{bmatrix} a_3 + b_3 &  a_5 + b_2 &  a_4 + b_1 \\  a_4 + b_2 &  a_3 + b_1 &  a_5 + b_3 \\  a_5 + b_1 &  a_4 + b_3 &  a_3 + b_2 \end{bmatrix},
\end{align}
are squares. Further, both $E_1$ and $F_1$ are semi-magic matrices, their common sums being $a_1+a_2+a_3+b_1+b_2+b_3$ and $a_3+a_4+a_5+b_1+b_2+b_3$, respectively. Finally, it is readily seen that the entries of the matrices $E_1$ and $F_1$ satisfy the conditions \eqref{relef}. 

Conversely, let $E=[e_{ij}^2]$ and $F=[f_{ij}^2]$ be two  semi-magic matrices such that  the conditions \eqref{relef} are satisfied.
We will now show that the integers $\{a_1, \ldots, a_5\}$ and $\{b_1, b_2, b_3\}$ defined by \eqref{defAB} and \eqref{vala45} are such that all the 15 sums $a_i+b_j, i=1, \ldots, 5,  j=1, 2, 3$, are  squares. 

The nine sums $a_i+ b_j,i=1, 2, 3, j=1, 2, 3$, are squares as proved earlier. To prove that the remaining six sums are squares, we first note that since $E$ is a semi-magic matrix, the sum of the second row and the third column of $E$ is the same, hence $e_{21}^2+e_{22}^2=e_{13}^2+e_{33}^2$, and thus $b_2=e_{21}^2-e_{13}^2=e_{33}^2-e_{22}^2$. In the following proof, it will be convenient to use the latter value of $b_2$. We now note that 
\begin{equation}
\begin{aligned}
a_4+b_1 & = f_{13}^2, & & &&\\
a_4+b_2 & = f_{13}^2+ e_{33}^2-e_{22}^2 & =&f_{13}^2+ f_{33}^2-f_{22}^2 & = & f_{21}^2, \\ 
a_4+b_3 & = f_{13}^2+e_{11}^2 - e_{22}^2 & = & f_{13}^2+f_{11}^2 - f_{22}^2 & = &f_{32}^2,\\
a_5+b_1 & = f_{31}^2, &&&& \\
a_5+b_2 & =f_{31}^2+ e_{33}^2-e_{22}^2 & = &f_{31}^2+ f_{33}^2-f_{22}^2 &= & f_{12}^2, \\
a_5+b_3 & = f_{31}^2+e_{11}^2 - e_{22}^2 & =&  f_{31}^2+f_{11}^2 - f_{22}^2 &= &f_{23}^2. 
	\end{aligned}
\end{equation}
Thus, all the 15 sums $a_i +b_j, $i=1, \ldots, 5$, j=1, 2, 3$, are squares. This proves the second part of the lemma concerning a quintuple  and a triple.

Finally we note that  if there exist two quadruples of integers $\{a_i, i=1, \ldots, 4\}$ and $\{b_i, j=1, \ldots, 4\}$  such that all the 16 sums $a_i +b_j, i=1, \ldots, 4, j=1, \ldots, 4$,  are perfect squares, then it is readily seen that the  three matrices $E_1$, defined by \eqref{matrixE1},  and the matrices $G_1$ and $H_1$ defined by 
\begin{align}
G_1 & =\begin{bmatrix} a_4 + b_4 &  a_2 + b_2 &  a_1 + b_1 \\  a_1 + b_2 &  a_4 + b_1 &  a_2 + b_4 \\  a_2 + b_1 &  a_1 + b_4 &  a_4 + b_2 \end{bmatrix},\\[0.1in]
H_1 & =\begin{bmatrix} a_3 + b_3 &  a_4 + b_4 &  a_1 + b_1 \\  a_1 + b_4 &  a_3 + b_1 &  a_4 + b_3 \\  a_4 + b_1 & a_1 + b_3 &  a_3 + b_4 \end{bmatrix},
\end{align} 
are such that all the entries of these three matrices are perfect squares, all of them are semi-magic matrices (their common sums being $a_1+a_2+a_3+b_1+b_2+b_3$, $a_1+a_2+a_4+b_1+b_2+b_4$ and $a_1+a_3+a_4+b_1+b_3+b_4$, respectively) and  the relations \eqref{releg},  \eqref{releh} and  \eqref{relgh} are all satisfied.

Conversely, if there exist three semi-magic integer matrices $E=[e_{ij}^2], G =[g_{ij}^2], H=[h_{ij}^2]$, with squared entries satisfying the conditions \eqref{releg}, \eqref{releh} and  \eqref{relgh}, we will  show that the  integers  $a_i$ and $b_j$  defined by \eqref{defAB}  and \eqref{vala4b4}  are such that  all the 16  sums $a_i +b_j, i=1, \ldots, 4, j=1, \ldots, 4$, are perfect squares. 

We note that the nine sums $a_i+ b_j,i=1, 2, 3, j=1, 2, 3$, are squares as proved earlier. Next, using the relations \eqref{defAB}, \eqref{vala4b4} and the fact that $G$ is a semi-magic matrix, we have,
\begin{equation*}
\begin{aligned}
a_1+b_4 & =e_{13}^2 + g_{11}^2 - g_{22}^2 =g_{13}^2 + g_{11}^2 - g_{22}^2=g_{32}^2, \\
a_2+b_4 & = e_{31}^2 + g_{11}^2 - g_{22}^2 =g_{31}^2+g_{11}^2-g_{22}^2 = g_{23}^2, \\
a_4+b_1 & = g_{22}^2, \\
a_4+b_2 & =g_{22}^2 + e_{21}^2-e_{13}^2 = g_{22}^2 +g_{21}^2 -g_{13}^2 =g_{33}^2, \\
a_4+b_4 & = g_{11}^2. \\
\end{aligned}
\end{equation*}

Finally, using the relations \eqref{releh} and \eqref{relgh}, and the fact that $H$ is a semi-magic matrix, we have,
\begin{equation*}
\begin{aligned}
a_3+b_4 & = e_{22}^2 + g_{11}^2 - g_{22}^2 = h_{22}^2 + h_{12}^2-h_{31}^2=h_{33}^2,\\
a_4+b_3 & =g_{22}^2 + e_{11}^2 - e_{22}^2  =h_{31}^2 +  h_{11}^2 - h_{22}^2 = h_{23}^2.
\end{aligned}
\end{equation*}

Thus all the 16  sums $a_i +b_j, i=1, \ldots, 4, j=1, \ldots, 4$, are  squares. This proves the final  part of the lemma concerning the two quadruples   and the proof is complete.
\end{proof} 
 
In the next three sections, we will obtain  two sets of integers $\{a_1, a_2, \ldots, a_m\}$ and $\{b_1, b_2, \ldots, a_n\}$  in the three cases when $(m, n)$ is either (3, 3) or (5, 3) or (4, 4), respectively, such that, in each case, all the  $mn$ sums $a_i +b_j, i=1, \ldots, m$ and $j=1, \ldots, n$, are squares by constructing semi-magic integer matrices satisfying the conditions of Lemma \ref{onelem}. For this purpose,  we will repeatedly use the following semi-magic matrix of squares given by Euler (as quoted by Dickson \cite[p. 530]{Di}):
\small
\begin{equation}
M=\begin{bmatrix} (p^2+q^2-r^2-s^2)^2 & (2qr+2ps)^2 & (2qs-2pr)^2 \\(2qr-2ps)^2 & (p^2-q^2+r^2-s^2)^2 & (2pq+2rs)^2 \\(2qs+2pr)^2 & (2rs-2pq)^2 & (p^2-q^2-r^2+s^2)^2 \end{bmatrix}. \label{Euler}
\end{equation}
\normalsize:

\section{Two triples $\{a_1, a_2, a_3\}$ and $\{b_1, b_2,b_3\}$  such that all the nine sums $a_i+b_j$ are squares}\label{sets33}
The following theorem gives three pairs of triples $\{a_1, a_2, a_3\}$ and $\{b_1, b_2, b_3\}$, in terms of four parameters, such that all the nine sums $a_i+b_j$ are perfect squares.

\begin{theorem}If $\{a_1, a_2, a_3\}$ and $\{b_1, b_2, b_3\}$ are two triples of integers, defined in terms of arbitrary integer parameters $p, q, r$ and $s$, either by
\begin{equation}
\begin{aligned}
a_1 & =  0, \\
a_2 & =  16pqrs, \\
a_3 & =  (p + q + r + s)(p + q - r - s)(p - q + r - s)(p - q - r + s), \\
b_1 & =  4(pr - qs)^2,\\
 b_2 & =  4(ps - qr)^2,\\
 b_3 & =  4(pq - rs)^2,
\end{aligned}
\label{triplepair1}
\end{equation}
or by
\begin{equation}
\begin{aligned}
a_ 1 & =  -(p + q + r + s)(p + q - r - s)(p - q + r - s)(p - q - r + s), \\
a_ 2 & =  -(p - q + r + s)(p - q - r - s)(p + q + r - s)(p + q - r + s), \\
a_ 3 & =  0, \\
b_ 1 & =  (p^2 - q^2 + r^2 - s^2)^2,\\
 b_ 2 & =  (p^2 - q^2 - r^2 + s^2)^2,\\
 b_ 3 & =  (p^2 + q^2 - r^2 - s^2)^2,
\end{aligned}
\label{triplepair2}
\end{equation}
or by
\begin{equation}
\begin{aligned}
a_1 & = 4(ps - qr)^2, & a_2 & = 4(ps + qr)^2, \\
a_3 & = (p^2 - q^2 - r^2 + s^2)^2, & b_1 & = 4(p^2 - q^2)(r^2 - s^2),\\
 b_2 & = 0, & b_3 & = 4(p^2 - r^2)(q^2 - s^2),
\end{aligned}
\label{triplepair3}
\end{equation}
then for each of these three pairs of triples $\{a_1, a_2, a_3\}$ and $\{b_1, b_2, b_3\}$, all the nine sums $a_i+b_j, i=1, 2, 3, j=1, 2, 3$, are perfect squares. 
\end{theorem}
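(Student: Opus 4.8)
The plan is to prove all three pairs in a uniform way via Lemma~\ref{onelem}, taking as the matrix $E$ either Euler's matrix $M$ of \eqref{Euler} or a copy of it obtained by permuting rows and columns. The key point is that $M$ is \emph{already} a $3\times 3$ semi-magic integer matrix all of whose entries are perfect squares — Euler's identity makes each entry a square and makes the common row and column sum equal to $(p^2+q^2+r^2+s^2)^2$ — and both properties are preserved when we simultaneously permute the rows and columns of $M$, or relabel or change the signs of $p,q,r,s$. Hence $M$, and every such permuted copy of it, is a legitimate input to the first part of Lemma~\ref{onelem}.

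For \eqref{triplepair1} I would take $E=M$ and read off $a_1=e_{13}^2$, $a_2=e_{31}^2$, $a_3=e_{22}^2$, $b_1=0$, $b_2=e_{21}^2-e_{13}^2$, $b_3=e_{11}^2-e_{22}^2$ from \eqref{defAB}, then apply the shift of Lemma~\ref{canonicallemma} with $t=e_{13}^2$ so that $a_1$ becomes $0$. This turns $b_1$ into $e_{13}^2=(2qs-2pr)^2=4(pr-qs)^2$, turns $a_2$ into $e_{31}^2-e_{13}^2=(2pr+2qs)^2-(2pr-2qs)^2=16pqrs$, turns $b_2$ into $e_{21}^2=4(ps-qr)^2$, and turns $a_3$ into $e_{22}^2-e_{13}^2=(p^2-q^2+r^2-s^2)^2-4(pr-qs)^2$. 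The one place where any thought enters is recognising this last quantity as the quartic product in \eqref{triplepair1}, i.e. the identity
\[
(p^2-q^2+r^2-s^2)^2-(2pr-2qs)^2=(p+q+r+s)(p+q-r-s)(p-q+r-s)(p-q-r+s),
\]
which is just $X^2-Y^2=(X+Y)(X-Y)$ followed by writing each of the two quadratic factors as a difference of two squares; I would prove this once and obtain the sign-twisted companions needed for $a_1,a_2$ of \eqref{triplepair2} by replacing a single parameter by its negative.

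Pair \eqref{triplepair2} comes from the very same $E=M$, only normalised by the shift $t=e_{22}^2$, so that now $a_3=0$; then $b_1,b_2,b_3$ become the three ``diagonal'' squares $(p^2-q^2+r^2-s^2)^2$, $(p^2-q^2-r^2+s^2)^2$, $(p^2+q^2-r^2-s^2)^2$ and $a_1,a_2$ become the quartic products shown (indeed \eqref{triplepair2} is then literally \eqref{triplepair1} shifted by Lemma~\ref{canonicallemma}). Pair \eqref{triplepair3} comes from the copy $E'$ of $M$ obtained by interchanging rows $2,3$ and columns $2,3$; feeding $E'$ into \eqref{defAB} already gives $b_1=0$, and after reordering the elements of $A$ and of $B$ one lands on \eqref{triplepair3}, the verifications now reducing to the elementary identities $(ps\pm qr)^2+(p^2-q^2)(r^2-s^2)=(pr\pm qs)^2$, $(ps\pm qr)^2+(p^2-r^2)(q^2-s^2)=(pq\pm rs)^2$ and $(p^2-q^2-r^2+s^2)^2+4(p^2-q^2)(r^2-s^2)=(p^2-q^2+r^2-s^2)^2$, each just a completed or split square. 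I expect the only real obstacle to be organisational — pinning down which permutations and sign changes of $M$ produce exactly the three displayed pairs, and spotting the square roots of the quartic entries. Since the theorem statement itself never mentions the lemmas, the version I would actually write out is the self-contained one: for each pair, simply exhibit the nine one-line identities $a_i+b_j=(\cdots)^2$, with a remark afterwards that those square roots are explained by Euler's matrix via Lemma~\ref{onelem}.
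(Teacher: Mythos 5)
Your proposal is correct and essentially reproduces the paper's proof: both feed Euler's semi-magic matrix $M$ into Lemma~\ref{onelem} via \eqref{defAB} and then normalize with the shift of Lemma~\ref{canonicallemma}, the paper obtaining all three pairs from the single base triple by taking $t=4(pr-qs)^2$, $(p^2-q^2+r^2-s^2)^2$ and $4(p^2-q^2)(r^2-s^2)$ in succession. Your only deviation --- deriving \eqref{triplepair3} from the row/column-permuted copy of $M$ (swap rows and columns $2,3$) instead of from the third shift --- is a cosmetic variant that checks out, since permuting rows and columns preserves the semi-magic property and the squared entries.
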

\begin{proof}
We first apply Lemma \ref{onelem} taking $E$ to be the matrix $M$ 
defined by \eqref{Euler} and, from the relations \eqref{defAB} we obtain two triples $\{a_1, a_2, a_3\}$ and $\{b_1, b_2,b_3\}$. Next, in accordance with Lemma \ref{canonicallemma}, we subtract $t$ from each of the integers $a_i$ and we add $t$ to each of the integers $b_i$ to get the following two triples, in  terms of arbitrary integer parameters $p, q, r, s$ and $t$ such that all the nine sums $a_i+b_j, i=1, 2, 3, j=1, 2, 3$, are squares:
\begin{equation}
\begin{aligned}
a_1 &= 4(pr - qs)^2-t,\\ 
a_2 &= 4(pr + qs)^2-t, \\
a_3 &= (p^2 - q^2 + r^2 - s^2)^2 - t,\\
 b_1 &= t, \\
b_2 &= t - 4(p^2 - q^2)(r^2 - s^2),\\
 b_3 &= t + 4(p^2 - s^2)(q^2 - r^2).
\end{aligned}
\end{equation}

Assigning to $t$ the three values $4(pr - qs)^2, (p^2 - q^2 + r^2 - s^2)^2$ and $4(p^2 - q^2)(r^2 - s^2)$, in succession, we get the three pairs of triples defined by \eqref{triplepair1}, \eqref{triplepair2} and \eqref{triplepair3}, respectively, such that in each case, all the nine sums $a_i+b_j, i=1, 2, 3, j=1, 2, 3$, are squares. This proves the lemma.
\end{proof}

As a numerical example, taking $(p, q, r, s)= (5, 3, 2, 1)$ in the triples defined by \eqref{triplepair3}, we get the two triples $A= \{ 4, 169,  484\}$ and $\{0,  192, 672\}$ such that all the nine elements of the sumset $A+B$ are perfect squares. 

\section{A quintuple and a triple}

We will now apply Lemma \ref{onelem} to obtain a quintuple $a_1, \ldots, a_5$ and a triple $b_1, b_2, b_3$ such that all the 15 sums $a_i+b+j$ are squares. The result thus obtained is stated in the following theorem.

\begin{theorem}\label{thm35}
If $a_1, \ldots, a_5$ and $b_1, b_2, b_3$ are rational numbers, defined in terms of arbitrary rational parameters $f, g, m, u_1, u_2, v_1$,and $v_2$ by
\begin{equation}
\begin{aligned}
a_1 & = 4(mu_1u_2(u_1v_1 - u_2v_2)f^2 - (u_1v_2 - u_2v_1)\\
&  \quad \quad \times (m^2u_1u_2v_1v_2 - 1)fg - mv_1v_2(u_1v_1 - u_2v_2)g^2)^2, \\
a_2 & = 4(mu_1u_2(u_1v_1 + u_2v_2)f^2 + (u_1v_2 + u_2v_1)\\
&  \quad \quad \times (m^2u_1u_2v_1v_2 + 1)fg + mv_1v_2(u_1v_1 + u_2v_2)g^2)^2, \\
a_3 & = ((u_1^2u_2^2(v_1^2 - v_2^2)m^2 + u_1^2 - u_2^2)f^2\\
&  \quad \quad - (v_1^2v_2^2(u_1^2 - u_2^2)m^2 + v_1^2 - v_2^2)g^2)^2, \\
 a_4 & = 4(mu_1u_2(u_1v_1 - u_2v_2)f^2 + (u_1v_2 - u_2v_1)\\
&  \quad \quad \times (m^2u_1u_2v_1v_2 - 1)fg - mv_1v_2(u_1v_1 - u_2v_2)g^2)^2, \\
a_5 & = 4(mu_1u_2(u_1v_1 + u_2v_2)f^2 - (u_1v_2 + u_2v_1)\\
&  \quad \quad \times (m^2u_1u_2v_1v_2 + 1)fg + mv_1v_2(u_1v_1 + u_2v_2)g^2)^2,\\
b_1 & =0,\\
b_2 & = -4(u_1^2-u_2^2)(v_1^2-v_2^2)(m^2u_1^2u_2^2f^2 - g^2)(f^2-m^2v_1^2v_2^2g^2),\\
b_3 & = 4(m^2u_1^2v_1^2 - 1)(m^2u_2^2v_2^2 - 1)(f^2u_1^2 - g^2v_1^2)(f^2u_2^2 - g^2v_2^2)
	\end{aligned}
	\label{thm35valab}
\end{equation}
all the 15 sums $a_i +b_j, $i=1, \ldots, 5$, j=1, 2, 3$, are squares of rational numbers. 
\end{theorem}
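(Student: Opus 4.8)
The plan is to invoke the second part of Lemma~\ref{onelem}, the one producing a quintuple and a triple: to get rational numbers $a_1,\dots,a_5$ and $b_1,b_2,b_3$ with all $15$ sums $a_i+b_j$ squares it suffices to exhibit two $3\times 3$ semi-magic matrices of squares $E$ and $F$ whose diagonal entries coincide, i.e.\ satisfying \eqref{relef}; then $a_1,a_2,a_3,b_1,b_2,b_3$ are read off from \eqref{defAB} and $a_4,a_5$ from \eqref{vala45}. Because Lemma~\ref{canonicallemma} lets one move freely between rational and integral solutions, I would work with rational entries from the start.

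For $E$ I would take Euler's matrix $M$ of \eqref{Euler} with its parameters $p,q,r,s$ chosen to be linear forms in two homogeneous variables $f,g$ with coefficients built from $m,u_1,u_2,v_1,v_2$, and for $F$ I would take the same matrix $M$ but with $g$ replaced by $-g$ in those forms. The diagonal entries of $M$ are $(p^2+q^2-r^2-s^2)^2$, $(p^2-q^2+r^2-s^2)^2$, $(p^2-q^2-r^2+s^2)^2$, so \eqref{relef} holds automatically as soon as the three quantities $p^2\pm q^2\mp r^2\mp s^2$ are unchanged by $g\mapsto -g$, which happens exactly when the coefficient of $fg$ is the same in each of $p^2,q^2,r^2,s^2$, so that these cross terms cancel in all three diagonal combinations. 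Imposing this equal--cross--term condition, together with the requirement that the off-diagonal entries $f_{13}^2,f_{31}^2$ of $F$ be genuinely new, pins the forms down; up to the symmetry of the construction one is led to
\begin{equation*}
\begin{aligned}
p &= u_2 f + m u_1 v_1 v_2\, g, & q &= u_1 f + m u_2 v_1 v_2\, g,\\
r &= m u_1 u_2 v_2\, f + v_1 g, & s &= m u_1 u_2 v_1\, f + v_2 g,
\end{aligned}
\end{equation*}
for which $p^2,q^2,r^2,s^2$ all have $fg$-coefficient $2m u_1 u_2 v_1 v_2$, with $F$ given by $P=u_2 f-m u_1 v_1 v_2 g$, and so on.

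With these choices the rest is mechanical. One checks directly that $p^2+q^2-r^2-s^2$, $p^2-q^2+r^2-s^2$ and $p^2-q^2-r^2+s^2$ have no $fg$ term, so they are fixed by $g\mapsto -g$ and \eqref{relef} holds; then from \eqref{defAB} and \eqref{vala45} one has $a_1=e_{13}^2=4(qs-pr)^2$, $a_2=e_{31}^2=4(qs+pr)^2$, $a_3=e_{22}^2=(p^2-q^2+r^2-s^2)^2$, $a_4=f_{13}^2$, $a_5=f_{31}^2$, together with $b_1=0$, $b_2=e_{21}^2-e_{13}^2=-4(p^2-q^2)(r^2-s^2)$ and $b_3=e_{11}^2-e_{22}^2=4(q^2-r^2)(p^2-s^2)$, and one simplifies. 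The one slightly delicate algebraic point is to factor the two binary quadratic forms that arise, $qs$ and $pr$ as forms in $f,g$ — for instance $m u_1^2 u_2 v_1 f^2 + u_1 v_2(m^2 u_2^2 v_1^2+1)fg + m u_2 v_1 v_2^2 g^2 = (m u_1 u_2 v_1 f + v_2 g)(u_1 f + m u_2 v_1 v_2 g)$, and similarly for $pr$ with $u_1,v_1$ and $u_2,v_2$ interchanged. Carrying out the substitution turns $e_{13}^2,e_{31}^2,e_{22}^2,f_{13}^2,f_{31}^2$ and the two differences into exactly the closed forms \eqref{thm35valab}, after which Lemma~\ref{onelem} guarantees that all $15$ sums $a_i+b_j$ are squares of rational numbers.

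I expect the only genuine obstacle to be the discovery of the forms $p,q,r,s$: the naive candidates for $F$ — a row/column permutation of $E$, a scalar multiple of $E$, or $M$ with permuted parameters — all preserve the multiset of entries of $M$ while fixing its diagonal, so they force $\{a_4,a_5\}=\{a_1,a_2\}$ and fail to enlarge the first set. The role of the extra "multiplier" parameter $m$ is precisely to supply the freedom needed for a non-degenerate solution of the equal--cross--term condition, one in which $f_{13}^2$ and $f_{31}^2$ are new; once $m$ is present, the system collapses to the factorization identity displayed above and everything else is routine expansion.
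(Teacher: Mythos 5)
Your proposal is correct and is essentially the paper's own argument: the paper also invokes the quintuple-and-triple part of Lemma~\ref{onelem} with $E$ and $F$ both Euler matrices \eqref{Euler}, parametrized so that the parameters of $F$ are obtained from those of $E$ by a sign flip (the paper flips $f$ after setting $e=gmv_1v_2$, $h=fmu_1u_2$, which after an overall sign change of $(p,q,r,s)$ is exactly your $g\mapsto-g$ substitution with the same linear forms $p,q,r,s$), making the diagonal entries coincide so that \eqref{relef} holds. Your verification that the cross terms in $fg$ cancel and that \eqref{defAB} and \eqref{vala45} then reproduce \eqref{thm35valab} checks out.
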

\begin{proof}
We will obtain the rational numbers $a_1, \ldots, a_5$ and $b_1, b_2, b_3$ by first finding two semi-magic matrices $E$ and $F$ of squares that satisfy the conditions \eqref{relef} and then using the relations \eqref{defAB} and \eqref{vala45} given in   Lemma \ref{onelem}. We begin with two matrices $E$ and $F$ obtained by  replacing  the parameters $p, q, r, s$ in Euler's  matrix $M$ by $p_1, q_1, r_1, s_1$ and $p_2, q_2, r_2, s_2$, respectively, that is, with the matrices,
\begin{equation}
E=\begin{bmatrix} (p_1^2+q_1^2-r_1^2-s_1^2)^2 & (2q_1r_1+2p_1s_1)^2 & (2q_1s_1-2p_1r_1)^2 
\\(2q_1r_1-2p_1s_1)^2 & (p_1^2-q_1^2+r_1^2-s_1^2)^2 & (2p_1q_1+2r_1s_1)^2 \\
(2q_1s_1+2p_1r_1)^2 & (2r_1s_1-2p_1q_1)^2 & (p_1^2-q_1^2-r_1^2+s_1^2)^2 \end{bmatrix}, \label{lem35E}
\end{equation}
and
\begin{equation}
F=\begin{bmatrix} (p_2^2+q_2^2-r_2^2-s_2^2)^2 & (2q_2r_2+2p_2s_2)^2 & (2q_2s_2-2p_2r_2)^2 \\
(2q_2r_2-2p_2s_2)^2 & (p_2^2-q_2^2+r_2^2-s_2^2)^2 & (2p_2q_2+2r_2s_2)^2 \\
(2q_2s_2+2p_2r_2)^2 & (2r_2s_2-2p_2q_2)^2 & (p_2^2-q_2^2-r_2^2+s_2^2)^2 \end{bmatrix}. \label{lem35F}
\end{equation}
The matrices $E$ and $F$ will satisfy the conditions \eqref{relef} if the following conditions are satisfied:
\begin{align}
p_1^2+q_1^2-r_1^2-s_1^2 & = p_2^2+q_2^2-r_2^2-s_2^2, \label{thm35eq1}\\
p_1^2-q_1^2+r_1^2-s_1^2 & = p_2^2-q_2^2+r_2^2-s_2^2, \label{thm35eq2}\\
p_1^2-q_1^2-r_1^2+s_1^2 & = p_2^2-q_2^2-r_2^2+s_2^2. \label{thm35eq3}
\end{align}
Both Eqs. \eqref{thm35eq2} and \eqref{thm35eq3} will be satisfied if $p_1^2-q_1^2 = p_2^2-q_2^2$ and $r_1^2-s_1^2=r_2^2-s_2^2$. Accordingly, a solution of the simultaneous equations \eqref{thm35eq2} and \eqref{thm35eq3} may readily be  written as follows:
\begin{equation}
\begin{aligned}
p_ 1  & =  eu_1+fu_2, \quad  & p_2  & = eu_1-fu_2, \\
 q_1  & =  eu_2+fu_1, \quad  & q_2  & =  eu_2-fu_1, \\
 r_1  & =  gv_1+hv_2, \quad  & r_2  & =  gv_1-hv_2, \\
s_1  & =  gv_2+hv_1, \quad  & s_2  & =  gv_2-hv_1,
\end{aligned}
\label{thm35valpqrs}
\end{equation}
where $e, f, g, h, u_1, u_2, v_1$ and $v_2$ are arbitrary parameters.

Substituting the values of $p_i, q_i$, $r_i, s_i, i=1, 2$, in Eq. \eqref{thm35eq1},  and  transposing all terms to the left-hand side, we get the condition,
\begin{equation}
(efu_1u_2 - ghv_1v_2)((e^2 + f^2)(u_1^2 + u_2^2) -(g^2+h^2) (v_1^2 + v_2^2))=0.
\label{condeq1}
\end{equation}
For simplicity, we solve Eq. \eqref{condeq1} by equating to 0 the first factor on the left-hand side of Eq. \eqref{condeq1}, and accordingly, we take
\begin{equation}
e = gmv_1v_2, \quad h = fmu_1u_2, \label{thm35valeh}
\end{equation}
where $m$ is an arbitrary rational parameter. 

Thus, when the values of $p_i, q_i, r_i, s_i, i=1, 2$, are given by \eqref{thm35valpqrs} and the values of $e$ and $h$ are given by \eqref{thm35valeh}, then the semi-magic matrices $E$ and $F$ defined by \eqref{lem35E} and \eqref{lem35F}, respectively, satisfy all the conditions \eqref{relef}. Now, on applying Lemma \ref{onelem}, we get the rational numbers $a_1, \ldots, a_5$ and $b_1, b_2, b_3$ defined by \eqref{thm35valab} such that all the 15 sums  $a_i +b_j, $i=1, \ldots, 5$, j=1, 2, 3$, are squares of rational numbers. This proves the theorem. 
\end{proof}

As a numerical example, on taking $(f, g, m, u_1, u_2, v_1, v_2)=(4, 1, 3, 2, 1, 3, 1)$ in the solution \eqref{thm35valab}, we get the quintuple 
\[
A=\{198916, 532900, 1674436, 13468900, 19404025\}
\]
 and the triple 
\[B=\{ 0, 3588000,  8527200\}
\]
 such that all the 15 elements of the sumset $A+B$ are perfect squares. 

We note that more examples of a quintuple and a triple may be obtained by equating to 0 the second factor on the left-hand side of Eq. \eqref{condeq1} and proceeding as before. As these solutions are more cumbersome to write, we omit them. 

\section{Two quadruples of integers $\{a_i, i=1, \ldots, 4\}$ and $\{b_i, j=1, \ldots, 4\}$ such that all sixteen sums $a_i +b_j$ are perfect squares.}
We will now obtain three examples of two quadruples $A=\{a_i, i=1, \ldots, 4\}$ and $B=\{b_i, j=1, \ldots, 4\}$, in parametric terms, such that all the 16  sums $a_i +b_j, i=1, \ldots, 4, j=1, \ldots, 4$, are perfect squares. We also show how more such examples of quadruples may be constructed.

We can, and of course we will, apply Lemma \ref{onelem} to find such quadruples. We, however, first note that if, instead of constructing the three semi-magic matrices $E, G$ and $H$ as stipulated in Lemma \ref{onelem}, we just construct two semi-magic  matrices $E$ and $G$ such that the conditions \eqref{releg} are satisfied, and we use the relations \eqref{defAB} and \eqref{vala4b4} to construct two quadruples $\{a_i, i=1, \ldots, 4\}$ and $\{b_i, j=1, \ldots, 4\}$, then except for the two sums $a_3+b_4$ and $a_4+b_3$, all the remaining 14 sums $a_i+b_j$ are perfect squares. This follows from a perusal of the proof of Lemma \ref{onelem} where the matrix $H$ and the conditions \eqref{releh} and \eqref{relgh} are needed only  to ensure that the two sums $a_3+b_4$ and $a_4+b_3$ are squares --- the matrices $E$ and $G$ and the relations \eqref{releg} being sufficient to prove that  14 of the sums $a_i+b_j$ are squares. 

In Section \ref{quadruplesfirstpair} we obtain, in parametric terms, two semi-magic matrices $E$ and $G$ with squared entries such that the conditions \eqref{releg} are satisfied. We then use the relations  \eqref{defAB} and \eqref{vala4b4} to obtain two quadruples   $\{a_i, i=1, \ldots, 4\}$ and $\{b_i, j=1, \ldots, 4\}$ such that 14 of the sums $a_i+b_j$ are squares. Finally, we choose the parameters such that the two remaining sums $a_3+b_4$ and $a_4+b_3$ also become squares.

In Section \ref{twomorepairs} we use the matrices $E$ and $G$ of Section \ref{quadruplesfirstpair} and construct the matrix $H$ such that all the conditions \eqref{releg}, \eqref{releh} and \eqref{relgh} are satisfied, and we apply Lemma \ref{onelem} to obtain two pairs of quadruples $A$ and $B$ such that all elements of the sumset $A+B$ are squares.

\subsection{A pair of quadruples} \label{quadruplesfirstpair} 
\begin{theorem}\label{thm44first}
If $a_1, \ldots, a_4$ and $b_1, \ldots, b_4$ are integers, defined in terms of arbitrary integer parameters $u_i, v_i, i=1, 2$,  by
\begin{equation}
\begin{aligned}
a_1  &  =  (v_1^4 - 6v_1^2v_2^2 + v_2^4)^2(v_1^2-v_2^2)^2(u_1^2 + u_2^2)^2, \\
a_2  &  =  (v_1^4 - 6v_1^2v_2^2 + v_2^4)^2(v_1^2+v_2^2)^2(u_1^2 - u_2^2)^2,  \\
a_3  &  =  (v_1^2-v_2^2)^2((v_1^2+v_2^2)^2u_1^2-16u_1u_2v_1^2v_2^2-(v_1^2+v_2^2)^2u_2^2)^2,  \\
a_4  &  =  (v_1^2-v_2^2)^2((v_1^2+v_2^2)^2u_1^2+16u_1u_2v_1^2v_2^2-(v_1^2+v_2^2)^2u_2^2)^2,  \\
b_1  &  =  0,  \\
b_2  &  =  (v_1^4 - 6v_1^2v_2^2 + v_2^4)^2((v_2-v_1)u_1+(v_1+v_2)u_2)\\
 &  \quad \quad \times((v_1+v_2)u_1+(v_1-v_2)u_2)((v_1-v_2)u_1+(v_1+v_2)u_2)\\
 &  \quad \quad \times ((v_1+v_2)u_1-(v_1-v_2)u_2),\\
b_3   &  =   -(v_1^2 - v_2^2)^2((v_1^2 + v_2^2)u_1+ 4u_1v_1v_2 -(v_1^2+ v_2^2)u_2)\\
& \quad \quad \times ((v_1^2 + v_2^2)u_1- 4u_1v_1v_2 -(v_1^2+ v_2^2)u_2)\\
& \quad \quad \times ((v_1^2 + v_2^2)u_1  + 4u_2v_1v_2+ (v_1^2+ v_2^2)u_2)\\
& \quad \quad \times ((v_1^2 + v_2^2)u_1 - 4u_2v_1v_2+ (v_1^2+ v_2^2)u_2),\\
b_4   &  =   -(v_1^2 - v_2^2)^2((v_1^2 + v_2^2)u_1+ 4u_1v_1v_2 +(v_1^2+ v_2^2)u_2)\\
& \quad \quad \times ((v_1^2 + v_2^2)u_1- 4u_1v_1v_2 +(v_1^2+ v_2^2)u_2)\\
& \quad \quad \times((v_1^2 + v_2^2)u_1+ 4u_2v_1v_2- (v_1^2+ v_2^2)u_2)\\
& \quad \quad \times((v_1^2 + v_2^2)u_1 - 4u_2v_1v_2- (v_1^2+ v_2^2)u_2),
\end{aligned}
\label{quadpairfirst}
\end{equation}
all the 16 sums $a_i +b_j, $i=1, \ldots, 4$, j=1, \ldots, 4$, are perfect squares.
\end{theorem}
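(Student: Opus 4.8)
The plan is to follow the template already used in the proof of Theorem \ref{thm35}, but now invoking the third part of Lemma \ref{onelem} in the streamlined form noted just before Section \ref{quadruplesfirstpair}: it suffices to exhibit two semi-magic integer matrices $E=[e_{ij}^2]$ and $G=[g_{ij}^2]$ of squares satisfying the four conditions \eqref{releg}, to define $a_1,a_2,a_3,b_1,b_2,b_3$ by \eqref{defAB} and $a_4,b_4$ by \eqref{vala4b4}, and then separately to arrange that the only two sums not covered by that remark, namely $a_3+b_4$ and $a_4+b_3$, are also squares. To build $E$ and $G$ I would specialise Euler's matrix $M$ of \eqref{Euler} twice, replacing $(p,q,r,s)$ by $(p_1,q_1,r_1,s_1)$ for $E$ and by $(p_2,q_2,r_2,s_2)$ for $G$; this guarantees that both matrices are semi-magic with squared entries.

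With this choice the conditions \eqref{releg} become four equations asserting that the entries of $E$ and $G$ in the positions $(1,2),(1,3),(2,1),(3,1)$ agree up to sign; written out, these are bilinear relations of the form $q_1r_1\pm p_1s_1=\pm(q_2r_2\pm p_2s_2)$ together with the analogues coming from the $(1,3)$ and $(3,1)$ entries. I would fix a convenient choice of signs and solve the resulting system in the manner of \eqref{thm35valpqrs}--\eqref{thm35valeh}: introduce new parameters (among them $u_1,u_2,v_1,v_2$, together with possibly an auxiliary scaling parameter) and write $p_i,q_i,r_i,s_i$ as explicit expressions in them so that all four relations hold identically. Substituting the result into \eqref{defAB} and \eqref{vala4b4} then produces two quadruples for which, by the remark preceding Section \ref{quadruplesfirstpair}, all sixteen sums $a_i+b_j$ except $a_3+b_4$ and $a_4+b_3$ are already perfect squares.

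It remains to deal with those last two sums, and this is the step I expect to be the genuine obstacle. Using the semi-magic property of $E$ and $G$ one rewrites $a_3+b_4=e_{22}^2+g_{11}^2-g_{22}^2$ and $a_4+b_3=g_{22}^2+e_{11}^2-e_{22}^2$, and since for Euler's matrix $g_{11}^2-g_{22}^2=4(q_2^2-r_2^2)(p_2^2-s_2^2)$ and $e_{11}^2-e_{22}^2=4(q_1^2-r_1^2)(p_1^2-s_1^2)$, each of these two sums has the shape (a linear combination of squares)$^2+4\times$(a product of two quadratics) in the remaining free parameters. Requiring both to be perfect squares is a pair of simultaneous Diophantine conditions, and the difficulty is to satisfy both while keeping a genuine four-parameter family rather than collapsing it; I would expect this to reduce to parametrising a conic (equivalently, to forcing a discriminant to be a square), and it is exactly this final parametrisation that should generate the quartic $v_1^4-6v_1^2v_2^2+v_2^4$ and the expressions $(v_1^2+v_2^2)^2u_i^2\pm16u_1u_2v_1^2v_2^2$ that appear in \eqref{quadpairfirst}.

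Once the parameters have been specialised so that $a_3+b_4$ and $a_4+b_3$ are squares, one clears denominators (which is legitimate by Lemma \ref{canonicallemma}) to obtain the integer formulas \eqref{quadpairfirst}, and then all sixteen sums $a_i+b_j$ are perfect squares by construction. As a cross-check, and as a self-contained alternative, one may instead take the explicit quadruples \eqref{quadpairfirst} directly and verify that each of the sixteen polynomials $a_i+b_j$ factors as the square of a polynomial in $u_1,u_2,v_1,v_2$; that verification is routine but lengthy, and it is the constructive route above that explains the provenance of the formulas.
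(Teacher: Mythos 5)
Your framework is the right one and matches the paper's: invoke the weakened form of Lemma \ref{onelem} (matrices $E$ and $G$ with \eqref{releg} give fourteen square sums via \eqref{defAB} and \eqref{vala4b4}), and then treat $a_3+b_4$ and $a_4+b_3$ separately. Your construction of $E$ and $G$ by two Euler specialisations with sign conditions is workable and, with the all\nobreakdash-plus choice of signs, collapses to exactly what the paper does (take $E=M$ and obtain $G$ by replacing $p,q$ with $mp,mq$ and dividing the entries by $m^2$, as in \eqref{matG}), leading to the intermediate quadruples \eqref{quadpair1}. Your identities $a_3+b_4=e_{22}^2+g_{11}^2-g_{22}^2$, $a_4+b_3=g_{22}^2+e_{11}^2-e_{22}^2$ and $e_{11}^2-e_{22}^2=4(p^2-s^2)(q^2-r^2)$ are also correct.

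The genuine gap is the step you yourself flag as the obstacle: you never actually force $a_3+b_4$ and $a_4+b_3$ to be squares, and your guess that this "reduces to parametrising a conic" is not how the difficulty resolves. The paper's mechanism has two specific ingredients you are missing. First, it imposes the auxiliary equation $a_3+b_4=a_4+b_3$, which with \eqref{quadpair1} factors as \eqref{auxeqna3b4}; killing the factor $mp^2+mq^2-r^2-s^2$ is achieved by the substitution \eqref{valmpq} with $m=t^2$, $p=u_1v_1+u_2v_2$, $q=u_1v_2-u_2v_1$, $r=t(u_1v_1-u_2v_2)$, $s=t(u_1v_2+u_2v_1)$ (using $p^2+q^2=(u_1^2+u_2^2)(v_1^2+v_2^2)$ and $r^2+s^2=t^2(u_1^2+u_2^2)(v_1^2+v_2^2)$). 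Second, after this substitution the two remaining sums become quartics in $u_1$, namely $\phi(t,u_1,u_2,\pm v_1,v_2)$, and one makes them squares not by a conic parametrisation but by choosing $t$ so that their common discriminant with respect to $u_1$ vanishes; the discriminant factors, and the choice \eqref{valt}, $t=(v_1^2+2v_1v_2-v_2^2)/(v_1^2-2v_1v_2-v_2^2)$, turns both quartics into perfect squares and, after scaling, produces exactly the quantities $v_1^4-6v_1^2v_2^2+v_2^4$ and $(v_1^2+v_2^2)^2u_1^2\pm16u_1u_2v_1^2v_2^2-(v_1^2+v_2^2)^2u_2^2$ in \eqref{quadpairfirst}. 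Without this (or some substitute), your constructive route does not reach the stated formulas. Your fallback --- verifying directly that each of the sixteen polynomials $a_i+b_j$ from \eqref{quadpairfirst} is the square of a polynomial in $u_1,u_2,v_1,v_2$ --- would indeed be a legitimate proof of the theorem as stated (for instance $a_1+b_2=4(v_1^4-6v_1^2v_2^2+v_2^4)^2(v_1^2+v_2^2)^2u_1^2u_2^2$), but you only assert that this check succeeds rather than carrying it out, so as written neither route is complete.
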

\begin{proof}
As discussed above, we will obtain the two quadruples $\{a_i, i=1, \ldots, 4\}$ and $\{b_i, j=1, \ldots, 4\}$ with all 16  sums $a_i +b_j$ being perfect squares by just constructing, in parametric terms,  two $3 \times 3$  semi-magic matrices $E$ and $G$  with squared entries  such that the conditions \eqref{releg} are satisfied. As before, we will use Euler's matrix $M$ to  construct the desired  semi-magic matrices $E$ and $G$.

 We take the matrix $E$ to be the matrix $M$ itself, that is, $E=M$. To construct the matrix $G$, we   replace $p$ and $q$ in the matrix $M$ by $mp$ and $mq$, respectively, where $m$ is some nonzero rational number, and then divide each  entry of the matrix thus obtained by $m^2$, when we get the semi-magic matrix
\begin{equation}
G=\begin{bmatrix} g_{11}^2  & (2qr +2ps)^2     &  (2qs - 2p r)^2   \\
(2qr - 2p s)^2   & g_{22}^2  & 4(m^2 p q+r s)^2/m^2   \\
(2qs + 2p r)^2   & 4(m^2 p q-r s)^2/m^2   & g_{33}^2
\end{bmatrix}
\label{matG}
\end{equation}
where
\begin{equation}
\begin{aligned}
g_{11} & = (m^2 p^2+m^2 q^2-r^2-s^2)/m, \\
g_{22} & = (m^2 p^2-m^2 q^2+r^2-s^2)/m,\\
g_{33} & = (m^2 p^2-m^2 q^2-r^2+s^2)/m.
\end{aligned}
\label{valgij}
\end{equation}

The entries of the matrices $E$ and $G$ satisfy the relations \eqref{releg}, and using the relations \eqref{defAB} and \eqref{vala4b4}, we get two quadruples $A=\{a_1, a_2, a_3, a_4\}$ and $B = \{b_1, b_2, b_3, b_4\}$ such that except for the two sums $a_3+b_4$ and $a_4+b_3$, all other 14  sums $a_i+b_j$ are perfect squares. The values of $a_i, b_i, i=1, \ldots, 4$, are given in terms of arbitrary parameters $m, p, q, r$ and $s$ by
\begin{equation}
\begin{aligned}
a_1   &  =  4(pr - qs)^2,  \quad &
a_2   &  =  4(pr + qs)^2, \\
a_3   &  =  (p^2 - q^2 + r^2 - s^2)^2, \quad &
 a_4   &  =  (m^2p^2 - m^2q^2 + r^2 - s^2)^2/m^2,\\
 b_1   &  =  0, \quad &
b_2   &  =  -4(p^2-q^2)(r^2 - s^2),\\
 b_3   &  =  4(p^2-s^2)(q^2 - r^2), \quad &
 b_4   &  =  4(m^2p^2 - s^2)(m^2q^2 - r^2)/m^2,
\end{aligned}
\label{quadpair1}
\end{equation}

To  make the two sums $a_3+b_4$ and $a_4+b_3$ also perfect squares, we impose the auxiliary condition $a_3+b_4=a_4+b_3$ which, on using the relations \eqref{quadpair1}, reduces to
\begin{equation}
(m^2 - 1)(mp^2 + mq^2 + r^2 + s^2)(mp^2 + mq^2 - r^2 - s^2)=0. \label{auxeqna3b4}
\end{equation}
A solution of \eqref{auxeqna3b4}, obtained by equating the last factor on the left-hand side of Eq. \eqref{auxeqna3b4}, is given by
\begin{equation}
\begin{aligned}
m & = t^2, \\
 p & = u_1v_1 + u_2v_2, \\
 q & = u_1v_2 - u_2v_1, \\
 r & = t(u_1v_1 - u_2v_2),\\
 s & = t(u_1v_2 + u_2v_1),
\end{aligned}
 \label{valmpq}
\end{equation}
where $t, u_i, v_i, i=1, 2$, are arbitrary parameters.

Now the value of the sum $a_3+b_4$, denoted by $\phi(t, u_1, u_2, v_1, v_2)$, may be worked out  using the relations \eqref{quadpair1} and \eqref{valmpq}, and is given by
\begin{multline}
 \phi(t, u_1, u_2, v_1, v_2)  =(t^2-1)((t^2-1)(v_1^2 + v_2^2)^2u_1^4-16(t^2 + 1)v_1v_2(v_1^2 - v_2^2)u_1^3u_2\\
 \quad \quad \quad \quad \quad \quad \quad \quad +2(t^2-1)(v_1^2 + v_2^2)^2u_1^2u_2^2+16(t^2 + 1)v_1v_2(v_1^2 - v_2^2)u_1u_2^3\\
+(t^2-1)(v_1^2 + v_2^2)^2u_2^4).
\end{multline}
Similarly, the value of the sum $a_4+b_3$ may be worked out and it is given by $\phi(t, u_1, u_2, -v_1, v_2)$. 

To make $\phi(t, u_1, u_2, v_1, v_2)$ a perfect square, we computed its  discriminant $d$ with respect to $u_1$ and equated it to 0. The discriminant of $\phi(t, u_1, u_2, -v_1, v_2)$ with respect to $u_1$ is also $d$, and the value of $d$ is given by
\begin{multline}
d = 262144(t^2 + 1)^2(t^2 - 1)^6u_2^{12}v_1^2v_2^2(v_1^2 - v_2^2)^2(t(v_1^2 - 2v_1v_2 - v_2^2)-v_1^2 - 2v_1v_2 + v_2^2)^2\\
\times (t(v_1^2 - 2v_1v_2 - v_2^2)+v_1^2 + 2v_1v_2 - v_2^2)^2(t(v_1^2 + 2v_1v_2 - v_2^2)-v_1^2 + 2v_1v_2 + v_2^2)^2\\
\times (t(v_1^2 + 2v_1v_2 - v_2^2)+v_1^2 - 2v_1v_2 - v_2^2)^2.
\end{multline}
Equating the factor $(t(v_1^2 - 2v_1v_2 - v_2^2)-v_1^2 - 2v_1v_2 + v_2^2)^2$ to 0, we get
\begin{equation}
t=(v_1^2 + 2v_1v_2 - v_2^2)/(v_1^2 - 2v_1v_2 - v_2^2), \label{valt}
\end{equation} and with this value of $t$,   it is readily verified that both $\phi(t, u_1, u_2, v_1, v_2)$ and $\phi(t, u_1, u_2, -v_1, v_2)$ become squares, that is, both the remaining sums $a_3+b_4$ and $a_4+b_3$ become perfect squares. 

Thus, on substituting the values of $m, p, q, r, s$ and $t$ given by \eqref{valmpq} and \eqref{valt}, respectively, in the relations \eqref{quadpair1}, we get  two quadruples $\{a_1, a_2, a_3, a_4\}$ and $\{b_1, b_2, b_3, b_4\}$ such that all the 16 sums $a_i+b_j, i=1, \ldots, 4, j=1, \ldots, 4$, are squares. On appropriate scaling, we now get the two quadruples defined by \eqref{quadpairfirst}
 as stated in the theorem.
\end{proof}

As a numerical example, on taking $(u_1, u_2, v_1, v_2)=(3, 1, 4, 3)$, Theorem \ref{thm44first} yields, on appropriate scaling, the  two quadruples,
\[
\begin{aligned}
A & = \{ 44782864, 340218025, 1738222864, 2777290000\}, \\
B & = \{0, 25777136, 1222007600, 1719217136\}
\end{aligned}.
\]
such that all the 16 elements of the sumset $A+ B$ are perfect squares.

\subsection{More examples of pairs of quadruples}\label{twomorepairs}
We will now apply Lemma \ref{onelem} to prove the following theorem which gives two new pairs of quadruples $A$ and $B$ such that all 16 elements of the sumset $A+ B$ are squares. 
\begin{theorem}\label{twoquadruplepairs} If $\{a_1, a_2, a_3, a_4\}$ and $\{b_1, b_2, b_3, b_4\}$ are two quadruples, defined in terms of arbitrary parameters $m, q$ and $r$ by
\begin{equation}
\begin{aligned}
a_1  &  = 16m^2(m^4-1)^2((5m^4 - 2m^2 + 1)q^2 - 2(m^4 -6m^2+ 1)qr \\
& \quad \quad  + (m^4 - 2m^2 + 5)r^2)^2,\\
a_2  &  =  16m^2(m^4 - 1)^2((5m^4 - 2m^2 + 1)q^2 - (m^4 - 2m^2 + 5)r^2)^2,\\
a_3   &  =  16m^2(m^4 - 1)^2((7m^4 - 2m^2 - 1)q^2 - (2m^4 - 12m^2 + 2)qr \\
& \quad \quad - (m^4 + 2m^2 - 7)r^2)^2,\\
a_4   &  =  (m^2 - 1)^2((3m^8 + 40m^6 - 26m^4 - 1)q^2 + 2(m^4 - 6m^2 + 1)^2qr\\
  & \quad \quad - (m^8 + 26m^4 - 40m^2 - 3)r^2)^2,\\
b_1  & =0, \\
b_2  &  = -4m^2((m^4 + 6m^2 - 3)q + (m^4 - 2m^2 + 5)r)\\
 & \quad \quad \times ((3m^4 - 6m^2 - 1)q - (m^4 - 2m^2 + 5)r)\\
 & \quad \quad \times ((5m^4 - 2m^2 + 1)q - (3m^4 - 6m^2 - 1)r)\\
 & \quad \quad \times ((5m^4 - 2m^2 + 1)q + (m^4 + 6m^2 - 3)r),\\
b_3  &  =  -128m^2(m^4 - 1)^3(q^2 - r^2)(3m^2q - m^2r - q + 3r)(m^2q + r),\\
b_4  &  =  16(m^2 + 1)^2(m^2 - 1)^3(m^2q^2 - r^2)\\
 & \quad \quad \times ((m^4 + 6m^3 - 2m - 1)q - (m^4 + 2m^3 - 6m - 1)r)\\
 & \quad \quad \times ((m^4 - 6m^3 + 2m - 1)q - (m^4 - 2m^3 + 6m - 1)r),
\end{aligned}
\label{quadpairsec}
\end{equation}
or, by
\begin{equation}
\begin{aligned}
a_1   & = 16m^2(m^4 - 1)^2((m^6 - 2m^4 + 5m^2)q^2 + (2m^5 - 12m^3 + 2m)qr + \\
 & \quad \quad (5m^4 - 2m^2 + 1)r^2)^2,\\
a_2   & = 16m^2(m^4 - 1)^2((m^6 - 2m^4 + 5m^2)q^2 - (5m^4 - 2m^2 + 1)r^2)^2,\\
a_3   & = (m^2 - 1)^2((m^{10} + 26m^6 - 40m^4 - 3m^2)q^2  \\
 & \quad \quad +(2m^9 - 24m^7 + 76m^5 - 24m^3 + 2m)qr\\
 & \quad \quad - (3m^8 + 40m^6 - 26m^4 - 1)r^2)^2,\\
a_4   & = 16m^2(m^4 - 1)^2((m^6 + 2m^4 - 7m^2)q^2 \\
 & \quad \quad - (2m^5 - 12m^3 + 2m)qr - (7m^4 - 2m^2 - 1)r^2)^2,\\
b_1 & =0, \\
b_2 & =  -4m^2((m^5 + 6m^3 - 3m)q - (5m^4 - 2m^2 + 1)r)\\
  & \quad \quad \times ((3m^5 - 6m^3 - m)q + (5m^4 - 2m^2 + 1)r)\\
  & \quad \quad \times ((m^5 - 2m^3 + 5m)q + (3m^4 - 6m^2 - 1)r)\\
	  & \quad \quad \times ((m^5 - 2m^3 + 5m)q - (m^4 + 6m^2 - 3)r),\\
b_3   & = -16m^2(m^2 + 1)^2(m^2 - 1)^3(q^2 - r^2)\\
& \quad \quad \times ((m^5 - 2m^4 + 6m^2 - m)q + (m^4 - 6m^3 + 2m - 1)r)\\
  & \quad \quad \times((m^5 + 2m^4 - 6m^2 - m)q + (m^4 + 6m^3 - 2m - 1)r),\\
b_4   & = 128m^3(m^4 - 1)^3(mr - q)(m^2q^2 - r^2)((m^3 - 3m)q + (3m^2 - 1)r),
\end{aligned}
\label{quadpairthird}
\end{equation}
then for each of these two pairs of quadruples, all the 16 sums $a_i+b_j, i=1, \ldots, 4, j=1, \ldots, 4$, are squares.
\end{theorem}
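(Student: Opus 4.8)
I propose to prove Theorem \ref{twoquadruplepairs} by completing the construction of Theorem \ref{thm44first} with the third matrix demanded by Lemma \ref{onelem}, instead of imposing the auxiliary equation $a_3+b_4=a_4+b_3$. Recall that in Section \ref{quadruplesfirstpair} the matrices $E=M$ and $G$ (given by \eqref{matG}--\eqref{valgij}) already satisfy \eqref{releg}, so the quadruples $A,B$ obtained from \eqref{defAB} and \eqref{vala4b4} have $14$ of their $16$ sums automatically square, only $a_3+b_4$ and $a_4+b_3$ being in doubt. The plan is to supply a $3\times 3$ semi-magic matrix $H$ with squared entries obeying \eqref{releh} and \eqref{relgh}: take $H$ to be a matrix obtained from Euler's matrix \eqref{Euler}, possibly after the same rescaling device that produced $G$ (replacing two of the parameters by multiples of themselves and dividing through by the square of the multiplier, so as to gain an extra parameter). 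Such an $H$ is, by construction, semi-magic with every entry a perfect square, so the only conditions that still need to be arranged are \eqref{releh} and \eqref{relgh}; once they hold, the proof of Lemma \ref{onelem} shows $a_3+b_4=h_{33}^2$ and $a_4+b_3=h_{23}^2$, and hence all $16$ sums are squares with no further verification required.

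The crux is therefore to solve the system \eqref{releh}--\eqref{relgh}, i.e.\ four equations identifying (up to sign) four entries of $H$ with $e_{11}=p^2+q^2-r^2-s^2$, $e_{22}=p^2-q^2+r^2-s^2$, $g_{11}=(m^2p^2+m^2q^2-r^2-s^2)/m$ and $g_{22}=(m^2p^2-m^2q^2+r^2-s^2)/m$. Writing the entries of $H$ in Euler form with fresh parameters, two of these equations are of ``sum/difference of four squares'' type and two are of ``$2(\cdot)$'' type. I would first dispose of the two sum/difference equations by the device used around \eqref{thm35valpqrs}: split each into a pair of linear relations and parametrise their common solutions by new quantities, thereby cutting the parameter count. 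Substituting into the remaining two equations and clearing denominators leaves a single polynomial condition that factors; equating one admissible factor to zero (exactly as was done with \eqref{condeq1} and \eqref{auxeqna3b4}) yields an explicit rational solution, and equating a second admissible factor to zero yields another — these produce the two families \eqref{quadpairsec} and \eqref{quadpairthird}.

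With a parametric solution of \eqref{releh}--\eqref{relgh} in hand, the remainder is bookkeeping: substitute the resulting values of $m,p,q,r,s$ (and the $H$-parameters) back into \eqref{defAB} and \eqref{vala4b4}, express everything as polynomials, and rescale by a common square factor (legitimate by Lemma \ref{canonicallemma}) so that $b_1=0$ and the entries share no squared divisor; renaming the surviving free parameters $m,q,r$ then gives the displayed formulas. In practice one can also confirm the claim independently by checking directly that each $a_i+b_j$ in \eqref{quadpairsec} and \eqref{quadpairthird} is the square of an explicit polynomial in $m,q,r$.

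The main obstacle is the second step: solving the four coupled equations \eqref{releh}--\eqref{relgh} simultaneously. The trouble is not any single equation but finding substitutions that respect all four at once while keeping the solution rational and the eventual formulas of manageable size; the algebra is heavy (degree-$4$ binary forms in $q,r$ with polynomial coefficients in $m$), and the choice of which factor of the residual polynomial to annihilate — and, implicitly, which sign combinations and entry-identifications to use — is precisely what separates the two families \eqref{quadpairsec} and \eqref{quadpairthird}.
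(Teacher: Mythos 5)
Your overall skeleton matches the paper's: keep $E=M$ and the matrix $G$ of Section \ref{quadruplesfirstpair}, adjoin a third Euler-type semi-magic matrix $H$ satisfying \eqref{releh} and \eqref{relgh}, and read off the quadruples from \eqref{defAB} and \eqref{vala4b4}. But the substantive content of the theorem is precisely the explicit rational solution of those matching conditions, and your proposal stops where that work begins. You say you would ``split the two sum/difference equations into linear relations, substitute into the remaining two, factor, and equate an admissible factor to zero,'' but you never show that this leads to a rational solution --- and it is not automatic. In the paper, \eqref{releh} is made to hold identically by a specific substitution carrying an extra parameter $n$ (giving \eqref{matH}), and the remaining conditions \eqref{relgh}, taken with the signs $g_{11}=-h_{12}$, $g_{22}=-h_{31}$, are resolved by a chain of nontrivial steps: the difference of the two equations factors as in \eqref{diffgh12}, which is used to introduce an auxiliary rational parameter $t$; the resulting pair of linear equations yields $p,s$ as in \eqref{valps}; substitution produces the factorized condition \eqref{eqgh1a}, whose first factor is a quadratic in $n$ whose discriminant \eqref{vald} is a quartic in $t$ that must be made a perfect square --- accomplished by Fermat's method, giving $t=2(m+1)/(m^2+1)$. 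None of these steps, in particular the Fermat step that guarantees rationality of the final parametrization, appears in your plan, so the existence of the claimed families is not established by it.

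A secondary inaccuracy: you assert that the two families \eqref{quadpairsec} and \eqref{quadpairthird} arise from annihilating two different factors, or from different sign or entry identifications. In the paper both families come from the same sign choice and the same (first) factor of \eqref{eqgh1a}; they correspond to the two rational roots in $n$ of \eqref{eqgh1b} for the single value of $t$ above, namely \eqref{sol1tn} and \eqref{sol2tn}. Your closing remark that one could instead verify directly that each $a_i+b_j$ in \eqref{quadpairsec} and \eqref{quadpairthird} is the square of an explicit polynomial in $m,q,r$ would indeed constitute a complete (if unilluminating) proof, but you do not exhibit those square roots, so as written the argument has a genuine gap.
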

\begin{proof}
In Section \ref{quadruplesfirstpair} we have already obtained two matrices $E$ and $G$ that satisfy the relations \eqref{releg}. We will now construct a matrix $H$ such that the relations \eqref{releh} and \eqref{relgh} are satisfied, and we will then apply Lemma \ref{onelem} to obtain two quadruples $\{a_1, a_2, a_3, a_4\}$ and $\{b_1, b_2, b_3, b_4\}$ such that all the 16 sums $a_i+b_j, i=1, \ldots, 4, j=1, \ldots, 4$, are squares. 

We will construct the matrix $H$ by replacing $p, q, r, s$, in Euler's matrix $M$ by 
\[
\begin{aligned}
((n+1)p - (n-1)s)/2, \quad & ((n+1)q - (n-1)r)/2, \\
  ((-n+1)q + (n+1)r)/2,\quad   & ((-n+1)p + (n+1)s)/2, 
	\end{aligned}
\] 
respectively, where $n$ is an arbitrary parameter, and dividing the entries of the resulting matrix by $n^2$, when we get the following semi-magic matrix:
\begin{equation}
H=\begin{bmatrix} (p^2 + q^2 - r^2 - s^2)^2 & h_{12}^2    & (2qs -2pr)^2   \\
h_{21}^2 & (p^2 - q^2 + r^2 - s^2)^2  & h_{23}^2   \\
h_{31}^2 &  (2rs - 2pq)^2 & h_{33}^2
\end{bmatrix}
\label{matH}
\end{equation}
where
\begin{equation}
\begin{aligned}
h_{12} & =  ((p^2 - 2ps + q^2 - 2qr + r^2 + s^2)n^2 \\
& \quad \quad - p^2 - 2ps - q^2 - 2qr - r^2 - s^2)/(2n)    \\,
h_{21} & =  ((p - q + r - s)(p + q - r - s)n^2 \\
& \quad \quad  - (p + q + r + s)(p - q - r + s))/(2n)      \\,
h_{23} & = ((q - r)(p - s)n^2 + (q + r)(p + s))/n, \\
h_{31} & =  ((q - r)(p - s)n^2 - (q + r)(p + s))/n, \\
h_{33} & = ((p - q + r - s)(p + q - r - s)n^2 \\
& \quad \quad  + (p + q + r + s)(p - q - r + s))/(2n).
\end{aligned}
\label{valhij}
\end{equation}
It is readily seen that the entries of the matrices $E$ and $H$ satisfy the relations \eqref{releh}. 

We now impose the conditions \eqref{relgh}, that is, $g_{11}^2=h_{12}^2$ and $g_{22}^2=h_{31}^2$ where the values of $g_{11}, g_{22}$ and $h_{12}, h_{31}$ are defined by \eqref{valgij} and \eqref{valhij}, respectively. These conditions will be satisfied if $g_{11}=\pm h_{12}$ and $g_{22}=\pm h_{31}$, that is, we get four possible pairs of equations, and each of these pairs of equations can be solved to get a matrix $H$ that satisfies all the conditions of Lemma \ref{onelem}, and we can thus  obtain the desired pairs of quadruples.

The simplest pairs of quadruples are obtained by solving the pair of equations $g_{11}=- h_{12}$ and $g_{22}=- h_{31}$, which may be written as 
\begin{equation}
\begin{aligned}
 (m^2 p^2+m^2 q^2-r^2-s^2)/m & = -\{(p^2 - 2ps + q^2 - 2qr + r^2 + s^2)n^2 \\
 & \quad \quad - p^2 - 2ps - q^2 - 2qr - r^2 - s^2\}/(2n),
\end{aligned}
\label{eqgh1}
\end{equation}
and
\begin{equation}
\begin{aligned}
 (m^2 p^2-m^2 q^2+r^2-s^2)/m & = -\{(q - r)(p - s)n^2 - (q + r)(p + s)\}/n.
\end{aligned}
\label{eqgh2}
\end{equation}   

On taking the differences of the respective sides of Eqs. \eqref{eqgh1} and \eqref{eqgh2}, we get
\begin{multline}
2(mq - r)(mq + r)/m = -((n(p - q + r - s) + p - q - r + s)\\
\times (n(p - q + r - s) - p + q + r - s))/(2n). \label{diffgh12}
\end{multline}
Thus there exists a rational number $t$ such that
\begin{equation}
2n(mq-r)=-t(n(p - q + r - s) + p - q - r + s)m, \label{difft1} 
\end{equation}
and  
\begin{equation}
2t(mq+r)=n(p - q + r -s) - p + q + r - s.
\label{difft2}
\end{equation}
Eqs. \eqref{difft1} and \eqref{difft2} may be considered as two linear equations in the variables $p$ and $s$, and on solving them, we get,
\begin{equation}
\begin{aligned}
p & = -(m(n - 1)(mq + r)t^2 - 2qmnt + n(n + 1)(mq - r))/(2mnt),\\
s & = -(m(n + 1)(mq + r)t^2 - 2rmnt + n(n - 1)(mq - r))/(2mnt).
\label{valps}
\end{aligned}
\end{equation}
On substituting the values of $p$ and $s$ given by \eqref{valps} in Eq. \eqref{eqgh1}, transposing all terms to the left-hand side, and factorizing, we get
\begin{multline}
\{(m - 1)n^2 + (m^2t^2 + mt^2 - 4mt + m + 1)n - mt^2(m - 1)\}\\
\times \{(m + 1)(mq - r)^2n^2+ (m - 1)(m^3q^2t^2 + 2m^2qrt^2 + mr^2t^2 + m^2q^2 - 2mqr + r^2)n\\
 - mt^2(m + 1)(mq + r)^2\} =0. \label{eqgh1a}
\end{multline}
To solve Eq. \eqref{eqgh1a}, we simply equate the first factor to 0, that is, we will solve the equation,
\begin{equation}
(m - 1)n^2 + (m^2t^2 + mt^2 - 4mt + m + 1)n - mt^2(m - 1)=0. \label{eqgh1b}
\end{equation}

Now, Eq. \eqref{eqgh1b} will have a rational solution for $n$ if its discriminant $d$ with respect to $n$ is a perfect square, where
\begin{equation}
d =  (m + 1)^2m^2t^4 - 8m^2(m + 1)t^3 + 6(m + 1)^2mt^2 - 8m(m + 1)t + (m + 1)^2, \label{vald}
\end{equation}
Since $d$ is a quartic function of $t$ and the coefficient of $t^4$ as well as the constant term are both $0$, we can, following a method Fermat (described by Dickson \cite[p. 639]{Di}), readily find a value of $t$ that makes  $d$ a perfect square, and we thus get
 $t=2(m + 1)/(m^2 + 1)$. With this value of $t$, the discriminant $d$ becomes a perfect square, and Eq. \eqref{eqgh1b} can be solved to get two rational roots for $n$. We thus get the following two solutions of Eq. \eqref{eqgh1b}:
\begin{equation}
(t, n)= (2(m + 1)/(m^2 + 1), 4m(m^2 - 1)/(m^2  + 1)^2, \label{sol1tn} 
\end{equation}
 and 
\begin{equation}
 (t, n)= (2(m + 1)/(m^2 + 1),-(m + 1)/(m - 1)). \label{sol2tn}
\end{equation}

Substituting the values of $t$ and $n$ given by \eqref{sol1tn} in the relations \eqref{valps}, we get
\begin{equation}
\begin{aligned}
p & = -((m^4 - 6m^2 + 1)q - (m^4 - 2m^2 + 5)r)/(2(m^4 - 1)), \\
s & = -((5m^4 - 2m^2 + 1)q - (m^4 - 6m^2 + 1)r)/(2(m^4 - 1)).
\end{aligned}
\label{valpsfin1}
\end{equation}
With the values of $p$ and $s$ given by \eqref{valpsfin1}, Eqs. \eqref{eqgh1} and \eqref{eqgh2} are satisfied, and hence we get three matrices $E= M$ (defined by \eqref{Euler}), $G$ and $H$ defined by \eqref{matG} and \eqref{matH}, respectively, satisfying all the conditions of Lemma \ref{onelem} and, on using the relations \eqref{defAB} and \eqref{vala4b4},  we  get two quadruples $\{a_1, a_2, a_3, a_4\}$ and $\{b_1, b_2, b_3, b_4\}$  such that all the 16 sums $a_i+b_j, i=1, \ldots, 4, j=1, \ldots, 4$, are squares. On appropriate scaling, we get the pair of quadruples \eqref{quadpairsec} given  by the theorem.

Similarly,  we can use  the solution \eqref{sol2tn} of Eq. \eqref{eqgh1b} to obtain a new set of three matrices $F, G, $ and $H$ satisfying the conditions of Lemma \ref{onelem} and we thus obtain the second pair of quadruples \eqref{quadpairthird} given  by the theorem.
\end{proof}

As a numerical application of Theorem \ref{twoquadruplepairs}, on taking $(m, q, r)=(2, 1, 3)$ in the relations \eqref{quadpairsec}, we get two quadruples which, on appropriate scaling, yield the two quadruples,
\[
\begin{aligned}
A & =\{14400, 266256, 435600, 12110400\},\\
B & = \{0, 104625, 223744, 12096000\},
\end{aligned}
\]
such that all the 16 elements of the sumset $A+ B$ are perfect squares.

While Theorem \ref{twoquadruplepairs} gives two pairs of quadruples, in parametric terms,  with the desired property, many more such examples may be obtained using Lemma 2. For instance, we could solve Eq. \eqref{eqgh1a} by equating to 0 the second factor on the left-hand side of \eqref{eqgh1a} to get the matrix $H$. Further, as already mentioned, the conditions \eqref{relgh} may be satisfied by solving any of  four possible pairs of equations arising from the conditions $g_{11}= \pm h_{12}$ and $g_{22}= \pm h_{31}$. In Theorem \ref{twoquadruplepairs} we solved just one of these pairs of equations. Solving the remaining three pairs of equations also yields the desired matrix $H$ and the pairs of quadruples. These solutions are, however, a little more cumbersome than the examples given by the theorem.

\section{Some open problems}
It would be interesting to find a quintuple $\{a_1, a_2,  \ldots, a_5\}$ and a quadruple $\{b_1, b_2,  \ldots, b_4\}$ such that all the 20 sums $a_i+b_j, i=1, \ldots, 5, j=1, \ldots, 4$, are squares. Since we have obtained multi-parameter solutions for  a quintuple and a triple as well as for a pair of quadruples with the desired property, it seems quite likely that there exist infinitely many examples of a quintuple $\{a_1, \ldots, a_5\}$ and a quadruple $\{b_1, \ldots, b_4\}$ such that all the 20 sums 
$a_i+b_j$ are squares. It would be much more challenging to find two quintuples $\{a_1, \ldots, a_5\}$ and  $\{b_1, \ldots, b_5\}$ such that all the 25 sums $a_i+b_j$ are squares. We leave these problems for future investigations.

\noindent Postal Address: 13/4 A Clay Square, \\
\hspace{1.1in} Lucknow - 226001,\\
\hspace{1.1in}  INDIA \\
\medskip
\noindent e-mail address: ajaic203@yahoo.com

\end{document}